\newtheorem{theorem}{Theorem}[section]
\newtheorem{corollary}[theorem]{Corollary}
\newtheorem{conjecture}[theorem]{Conjecture}
\newtheorem{lemma}[theorem]{Lemma}
\newtheorem{proposition}[theorem]{Proposition}
\title{Extremal regular graphs of given chromatic number}
\author{Christian Rubio-Montiel\footnotemark[1]}
\begin{document}
\maketitle

\def\thefootnote{\fnsymbol{footnote}}
\footnotetext[1]{Divisi{\' o}n de Matem{\' a}ticas e Ingenier{\' i}a, FES Acatl{\' a}n, Universidad Nacional Aut{\' o}noma de M{\' e}xico, 53150, Naucalpan, Mexico, {\tt christian.rubio@apolo.acatlan.unam.mx}.}


\begin{abstract}
We define an extremal $(r|\chi)$-graph as an $r$-regular graph with chromatic number $\chi$ of minimum order. We show that the Tur{\' a}n graphs $T_{ak,k}$, the antihole graphs and the graphs $K_k\times K_2$ are extremal in this sense. We also study extremal Cayley $(r|\chi)$-graphs and we exhibit several $(r|\chi)$-graph constructions arising from Tur{\' a}n graphs.
\end{abstract}

\textbf{Keywords}: Extremal graphs; Tur\'an graphs; Reed's conjecture.

\textbf{Mathematics Subject Classifications}: 05C35, 05C15.


\section{Introduction}

An \emph{$r$-regular} graph is a simple finite graph such that each of its vertices has degree $r$. Regular graphs are one of the most studied classes of graphs; especially those with symmetries such as Cayley graphs. Let $\Gamma$ be a finite group and let $X=\{x_1,x_2,\dots,x_t\}$ a generating set for $\Gamma$ such that $X=X^{-1}$ with $1_{\Gamma}\not\in X$; a \emph{Cayley} graph $Cay(\Gamma,X)$ has vertex set consisting of the elements of $\Gamma$ and two vertices $g$ and $h$ are adjacent if $gx_i=h$ for some $1\leq i \leq t$. Cayley graphs are regular but there exist non-Cayley vertex-transitive graphs. The Petersen graph is a classic example of this fact.

The \emph{girth} of a graph is the size of its shortest cycle. An \emph{$(r,g)$-graph} is an $r$-regular graph of girth $g$. An \emph{$(r,g)$-cage} is an $(r,g)$-graph of smallest possible order. The \emph{diameter} of a graph is the largest length between shortest paths of any two vertices. An \emph{$(r;D)$-graph} is an $r$-regular graph of diameter $D$.

While the cage problem asks for the constructions of cages, the \emph{degree-diameter problem} asks for the construction of $(r;D)$-graphs of maximum order. Both of them are open and active problems (see \cite{exoo2013dynamic,miller2013moore}) in which, frequently, it is considered the restriction to Cayley graphs, see \cite{MR3070127,MR2852512}.

In this paper, we study a similar problem using a well-known parameter of coloration instead of girth or diameter. A \emph{$k$-coloring} of a graph $G$ is a partition of its vertices into $k$ independent sets. The \emph{chromatic number $\chi(G)$ of $G$} is the smallest number $k$ for which there exists a $k$-coloring of $G$. 

We define an \emph{$(r|\chi)$-graph} as an $r$-regular graph of chromatic number $\chi$. In this work, we investigate the $(r|\chi)$-graphs of minimum order. We also consider the case of Cayley $(r|\chi)$-graphs.

The remainder of this paper is organized as follows: In Section \ref{section2} we show the existence of $(r|\chi)$-graphs, we define $n(r|\chi)$ as the order of the smallest $(r|\chi)$-graph, and similarly, we define $c(r|\chi)$ as the order of the smallest Cayley $(r|\chi)$-graph. We also exhibit lower and upper bounds on the orders of the extremal graphs. We show that the Tur{\' a}n graphs $T_{ak,k}$, antihole graphs (the complements of cycles) and $K_k\times K_2$ are Cayley $(r|\chi)$-graphs of order $n(r|\chi)$ for some $r$ and $\chi$. To prove that $K_k\times K_2$ are extremal we use instances of the Reed's Conjecture for which it is true. In Section \ref{section3} we only consider non-Cayley graphs. We give another upper bound for $n(r|\chi)$ and we exhibit two families of $(r|\chi)$-graphs with a few number of vertices which are extremal for some values of $r$ and $\chi$. Finally, in Section \ref{section4} we study the small values $2\leq r \leq 10$ and $2\leq \chi \leq 6$. We obtain a full table of extremal $(r|\chi)$-graphs except for the pair $(6|6)$.


\section{Cayley $(r|\chi)$-graphs}\label{section2}

It is known that for any graph $G$, $1\leq\chi(G)\leq \Delta+1$ where $\Delta$ is the maximum degree of $G$.
Therefore, for any $(r|\chi)$-graph we have that \[1\leq \chi \leq r+1.\]

Suppose that $G$ is a $(r|1)$-graph. Hence $G$ is the empty graph, then $r=0$. Therefore, the extremal graph is the trivial graph. We can assume that $2\leq \chi \leq r+1$.

Next, we prove that for any $r$ and $\chi$ such that $2\leq \chi \leq r+1$, there exists a Cayley $(r|\chi)$-graph $G$.

We recall that the \emph{$(n,k)$-Tur{\' a}n graph $T_{n,k}$} is the complete $k$-partite graph on $n$ vertices whose partite sets are as nearly equal in cardinality as possible, i.e., it is formed by partitioning a set of $n=ak+b$ vertices (with $0\leq b < k$) into the partition of independent sets $(V_1,V_2,\dots,V_b,V_{b+1},\dots,V_k)$ with order $|V_i|=a+1$ if $1\leq i \leq b$ and $|V_i|=a$ if $b+1\leq i \leq k$. Every vertex in $V_i$ has degree $a(k-1)+b-1$ for $1\leq i \leq b$ and every vertex in $V_i$ has degree $a(k-1)+b$ for $b+1\leq i \leq k$. The $(n,k)$-Tur{\' a}n graph has chromatic number $k$, and size (see \cite{MR0411988}) \[\left\lfloor \frac{(k-1)n^{2}}{2k}\right\rfloor.\]

\begin{lemma}
The $(ak,k)$-Tur{\' a}n graph $T_{ak,k}$ is a Cayley graph.
\end{lemma}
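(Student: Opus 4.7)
The plan is to realize $T_{ak,k}$ as a Cayley graph of the cyclic group $\mathbb{Z}_{ak}$. The starting observation is that $T_{ak,k}$ has $ak$ vertices partitioned into $k$ independent sets of size $a$, with an edge between two vertices exactly when they lie in different parts. So I would identify the vertex set with $\mathbb{Z}_{ak}$ and take as the $i$-th part the congruence class $\{i, i+k, i+2k, \dots, i+(a-1)k\}$ for $i=0,1,\dots,k-1$.

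With this identification, two vertices $g,h\in\mathbb{Z}_{ak}$ should be adjacent iff $g-h\not\equiv 0\pmod{k}$. This forces the choice of connection set
\[X=\{g\in\mathbb{Z}_{ak}:\ g\not\equiv 0\pmod{k}\}.\]
The verification then reduces to four routine checks: (i)~$1_{\Gamma}=0\notin X$; (ii)~$X$ is symmetric, since $g\not\equiv 0\pmod{k}$ iff $-g\not\equiv 0\pmod{k}$; (iii)~$X$ generates $\mathbb{Z}_{ak}$, which holds because $1\in X$ (note $k\geq 2$ in all nontrivial cases, per the reduction made just before the lemma); and (iv)~the adjacency in $Cay(\mathbb{Z}_{ak},X)$ coincides with the adjacency of $T_{ak,k}$, which is immediate from the definition of $X$ and the partition chosen above. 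Counting gives the expected degree $|X|=ak-a=a(k-1)$, matching the regularity of $T_{ak,k}$.

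There is essentially no obstacle: the only point worth a moment's care is the generation condition, which is where one sees why the statement restricts to $k\geq 2$ (the case $k=1$ being the trivial empty graph already excluded). Thus $T_{ak,k}\cong Cay(\mathbb{Z}_{ak},X)$, proving the lemma.
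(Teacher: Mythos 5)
Your proof is correct and rests on essentially the same idea as the paper's: take an abelian group of order $ak$ and let the connection set be the complement of a subgroup of order $a$, so that the $k$ parts of $T_{ak,k}$ become the cosets of that subgroup. The only difference is the choice of group: the paper uses $\mathbb{Z}_a\times\mathbb{Z}_k$ with $X=\{(i,j)\colon 0\leq i<a,\ 0<j<k\}$, while you use the cyclic group $\mathbb{Z}_{ak}$ with $X=\{g\colon g\not\equiv 0 \pmod{k}\}$, which in addition exhibits $T_{ak,k}$ as a circulant.
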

\begin{proof}
Let $\Gamma$ be the group $\mathbb{Z}_{a}\times\mathbb{Z}_{k}$ and $X=\{(i,j)\colon 0\leq i<a,0<j<k\}$. Then, the graph $Cay(\Gamma,X)$ is isomorphic to $T_{ak,k}$.
\end{proof}

Before to continue, we recall some definitions. Given two graphs $H_1$ and $H_2$, the \emph{cartesian product} $H_1\square H_2$ is defined as the graph with vertex set $V(H_1) \times V(H_2)$ and two vertices $(u,u')$ and $(v,v')$ are adjacent if either $u=v$ and $u'$ is adjacent with $v'$ in $H_2$, or $u'=v'$ and $u$ is adjacent with $v$ in $H_1$. The following proposition appears in \cite{MR2014620}.

\begin{proposition}\label{proposition1}
The cartesian product of two Cayley graphs is a Cayley graph.
\end{proposition}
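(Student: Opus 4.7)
The plan is to build the Cayley representation of $H_1 \square H_2$ directly from the Cayley representations of its factors. Suppose $H_1 = Cay(\Gamma_1, X_1)$ and $H_2 = Cay(\Gamma_2, X_2)$. I would take the ambient group to be the direct product $\Gamma := \Gamma_1 \times \Gamma_2$, and propose the generating set
\[
X := \bigl(X_1 \times \{1_{\Gamma_2}\}\bigr) \cup \bigl(\{1_{\Gamma_1}\} \times X_2\bigr).
\]
The choice is forced by the definition of the cartesian product: we need adjacency to come in two flavors, one that moves only the first coordinate via a generator of $\Gamma_1$, and one that moves only the second via a generator of $\Gamma_2$.

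Next I would verify that $X$ is a legitimate Cayley generating set. Closure under inverses is immediate from $(x_1,1_{\Gamma_2})^{-1}=(x_1^{-1},1_{\Gamma_2})$ together with $X_1=X_1^{-1}$, and analogously for the other piece. The identity $(1_{\Gamma_1},1_{\Gamma_2})$ is excluded because $1_{\Gamma_i}\notin X_i$ for $i=1,2$. To see that $X$ generates $\Gamma$, note that for arbitrary $(g_1,g_2)\in\Gamma$, one can first write $g_1$ as a product of elements of $X_1$, lift each factor to $X_1\times\{1_{\Gamma_2}\}$, then do the analogous thing for $g_2$ with $\{1_{\Gamma_1}\}\times X_2$.

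Finally I would check that the map $(u,u')\mapsto (u,u')$ is a graph isomorphism between $Cay(\Gamma,X)$ and $H_1\square H_2$. Two vertices $(g_1,g_2)$ and $(h_1,h_2)$ are adjacent in $Cay(\Gamma,X)$ exactly when $(h_1,h_2)=(g_1,g_2)\cdot(x_1,1_{\Gamma_2})$ for some $x_1\in X_1$ or $(h_1,h_2)=(g_1,g_2)\cdot(1_{\Gamma_1},x_2)$ for some $x_2\in X_2$; the first case gives $g_2=h_2$ with $g_1,h_1$ adjacent in $H_1$, while the second gives $g_1=h_1$ with $g_2,h_2$ adjacent in $H_2$, which is precisely the adjacency rule for $H_1\square H_2$.

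No step poses a real obstacle; the only point that deserves a moment's care is confirming that the disjointness $(X_1\times\{1_{\Gamma_2}\})\cap(\{1_{\Gamma_1}\}\times X_2)=\emptyset$ (guaranteed by $1_{\Gamma_i}\notin X_i$) prevents any single element of $X$ from producing both kinds of edge simultaneously, so that the edge sets match cleanly without double counting.
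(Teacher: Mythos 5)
Your proof is correct. Note that the paper itself does not prove this proposition at all: it is simply quoted from the reference \cite{MR2014620}, so there is no ``paper proof'' to compare against. What you supply is the standard argument, and it is complete: taking $\Gamma=\Gamma_1\times\Gamma_2$ with connection set $X=\bigl(X_1\times\{1_{\Gamma_2}\}\bigr)\cup\bigl(\{1_{\Gamma_1}\}\times X_2\bigr)$, you correctly verify the three requirements imposed by the paper's definition of a Cayley graph ($X=X^{-1}$, $1_\Gamma\notin X$, and $X$ generating $\Gamma$) and then check that right multiplication by the two types of generators reproduces exactly the two adjacency clauses defining $H_1\square H_2$. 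The closing remark about disjointness of the two pieces of $X$ is harmless but not actually needed: since the graphs are simple, an edge arising from two different generators would not be double counted; what the disjointness (together with $1_{\Gamma_i}\notin X_i$) really reflects is that no generator can simultaneously change both coordinates, which you have already used in the case analysis. In short, the proposal is a correct, self-contained proof of a fact the paper only cites.
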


On the other hand, the chromatic number of $H_1\square H_2$ is the maximum between $\chi(H_1)$ and $\chi(H_2)$, see \cite{MR2450569}. Now we can prove the following theorem.

\begin{theorem}\label{theorem1}
For any $r$ and $\chi$ such that $2\leq \chi \leq r+1$, there exists a Cayley $(r|\chi)$-graph.
\end{theorem}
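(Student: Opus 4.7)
The plan is to construct, for any $r$ and $\chi$ in the stated range, an explicit Cayley $(r|\chi)$-graph as a Cartesian product of two simple Cayley factors whose chromatic numbers and regularities are already pinned down in the excerpt. Since $\chi\leq r+1$, the integer $s:=r-\chi+1$ is non-negative, so I set $G:=K_\chi\,\square\,Q_s$, where $Q_s$ denotes the $s$-cube (the Cartesian product of $s$ copies of $K_2$, with the convention that $Q_0$ is the one-vertex graph).

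Three things must be verified, each of which follows at once from material already in hand. First, $K_\chi=T_{\chi,\chi}$ is Cayley by the preceding lemma (take $a=1$, $k=\chi$), and $K_2=Cay(\mathbb{Z}_2,\{1\})$ is trivially Cayley; iterating Proposition \ref{proposition1} shows that $Q_s$ is Cayley and hence so is $G$. Second, degrees add in a Cartesian product, so $G$ is $((\chi-1)+s)$-regular, and by the choice of $s$ this is exactly $r$. Third, the identity $\chi(H_1\,\square\,H_2)=\max\{\chi(H_1),\chi(H_2)\}$ quoted above gives $\chi(G)=\max\{\chi,2\}=\chi$, where the last equality uses the hypothesis $\chi\geq 2$.

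I do not foresee any real obstacle: every ingredient needed has been proved or cited earlier, and the idea is simply to absorb the ``extra'' regularity $r-(\chi-1)$ into a bipartite hypercube factor, which cannot lift the chromatic number above $\chi$. The only points needing a brief comment are the boundary cases: when $s=0$ we have $\chi=r+1$ and $G=K_\chi$, the complete graph itself; when $\chi=2$ we have $s=r-1$ and $G=K_2\,\square\,Q_{r-1}=Q_r$, recovering the familiar fact that the $r$-cube is a bipartite Cayley $r$-regular graph.
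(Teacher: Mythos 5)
Your proof is correct: every step (Cayley-ness of the factors, iterated use of Proposition \ref{proposition1}, additivity of degrees in Cartesian products, and the identity $\chi(H_1\,\square\,H_2)=\max\{\chi(H_1),\chi(H_2)\}$) is sound, and the boundary cases $s=0$ and $\chi=2$ are handled properly. The overall framework is the same as the paper's — a Cartesian product of two Cayley graphs, one factor carrying the chromatic number and the other topping up the degree — but your choice of factors differs: the paper writes $r=a(\chi-1)+b$ and takes $T_{a\chi,\chi}\,\square\,K_{b+1}$, whereas you take $K_\chi\,\square\,Q_{r-\chi+1}$. Your version is slightly more economical in prerequisites (you only need that $K_\chi$ and $K_2$ are Cayley, not the lemma that $T_{ak,k}$ is Cayley), but it produces a graph of order $\chi\cdot 2^{\,r-\chi+1}$, exponential in $r$. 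The paper's factors are chosen so that the construction has order $a\chi(b+1)$, and this quantity is immediately reused as the upper bound $c(r|\chi)\leq a\chi(b+1)$ in Theorem \ref{theorem2}; your construction proves existence just as well, but it could not serve that quantitative purpose, so if you intend your proof to replace the paper's you would lose the stated upper bound on $c(r|\chi)$.
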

\begin{proof}
Let $r=a(\chi-1)+b$ where $a\geq 1$ and $0\leq b<\chi-1$. Consider the Cayley graph $H_1=T_{a\chi,\chi}$. The graph $H_1$ has chromatic number $\chi$ and it is an $a(\chi-1)$-regular graph of order $a\chi$.

Additionally, consider the graph $H_2=T_{b+1,b+1}=K_{b+1}$. The graph $H_2$ has chromatic number $b+1 < \chi$ and it is a $b$-regular graph of order $b+1$.

Therefore, the graph $G=H_1\square H_2$ is a Cayley graph by Proposition \ref{proposition1} such that it has chromatic number \[\max\{\chi(H_1),\chi(H_2)\}=\chi,\]
regularity $r$ and order $a\chi(b+1)$.
\end{proof}

Now, we define $n(r|\chi)$ as the order of the smallest $(r|\chi)$-graph and $c(r|\chi)$ as the order of the smallest Cayley $(r|\chi)$-graph. Hence, \[r+1\leq n(r|\chi) \leq c(r|\chi) \leq a\chi(b+1)\] where $r=a(\chi-1)+b$ with $a\geq 1$ and $0\leq b<\chi-1$.

To improve the lower bound we consider the $(n,\chi)$-Tur{\' a}n graph $T_{n,\chi}$. Suppose $G$ is an $(r|\chi)$-graph. Let $\varsigma$ be a $\chi$-coloring of $G$ resulting in the partition $(V_1,V_2,\dots,V_\chi)$ with $|V_i|=a_i$ for $1\leq i \leq \chi$. Then the largest possible size of $G$ occurs when $G$ is a complete $\chi$-partite graph with partite sets $(V_1,V_2,\dots,V_\chi)$ and the cardinalities of these partite sets are as equal as possible. This implies that \[\frac{nr}{2}\leq\left\lfloor \frac{(\chi-1)n^{2}}{2\chi}\right\rfloor \leq \frac{(\chi-1)n^{2}}{2\chi},\] 
since $G$ has size $rn/2$. After some calculations we get that \[\frac{r\chi}{\chi-1}\leq n.\]

\begin{theorem}\label{theorem2}
For any $2\leq \chi \leq r+1 $, \[\left\lceil \frac{r\chi}{\chi-1} \right\rceil \leq n(r|\chi) \leq c(r|\chi) \leq \frac{r-b}{\chi-1}\chi(b+1)\]
where $\chi-1|r-b$ with $0\leq b <\chi-1$.
\end{theorem}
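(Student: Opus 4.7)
The statement is essentially an assembly of three facts, two already recorded in the preceding discussion and one immediate from the definitions. My plan is to verify each inequality in turn, starting with the leftmost.

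For the lower bound $\lceil r\chi/(\chi-1)\rceil\leq n(r|\chi)$, I would just quote the calculation performed immediately before the theorem: any $(r|\chi)$-graph $G$ of order $n$ admits a proper $\chi$-coloring, and bounding the size of $G$ by the size of the corresponding complete $\chi$-partite graph with parts as balanced as possible yields $rn/2\leq (\chi-1)n^{2}/(2\chi)$, so that $r\chi/(\chi-1)\leq n$. Because $n$ is a positive integer, taking ceilings preserves the inequality, giving the stated lower bound.

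The middle inequality $n(r|\chi)\leq c(r|\chi)$ is purely tautological: a Cayley $(r|\chi)$-graph is, in particular, an $(r|\chi)$-graph, so the infimum of orders over the subclass of Cayley examples is at least the infimum over all $(r|\chi)$-graphs. Since both classes are nonempty by Theorem \ref{theorem1}, both minima exist and the inequality is immediate.

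For the upper bound, I would reuse the construction in the proof of Theorem \ref{theorem1}. Write $r=a(\chi-1)+b$ with $a\geq 1$ and $0\leq b<\chi$; this is the unique division of $r-b$ by $\chi-1$, and the hypothesis $\chi-1\mid r-b$ ensures $a=(r-b)/(\chi-1)$ is an integer. The graph $T_{a\chi,\chi}\square K_{b+1}$ produced in Theorem \ref{theorem1} is a Cayley $(r|\chi)$-graph of order $a\chi(b+1)$. Substituting $a=(r-b)/(\chi-1)$ gives
\[
c(r|\chi)\;\leq\;a\chi(b+1)\;=\;\frac{r-b}{\chi-1}\chi(b+1),
\]
which is exactly the stated upper bound.

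There is no real obstacle: every ingredient has already been proved or defined in the excerpt, so the proof is a short synthesis. The only mild subtlety is making explicit that the requirement $\chi-1\mid r-b$ is automatically satisfied by the division algorithm for the unique $b\in\{0,\dots,\chi-1\}$, so the statement of the theorem is consistent with the construction of Theorem \ref{theorem1}.
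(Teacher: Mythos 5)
Your proposal is correct and matches the paper's own reasoning: the paper states Theorem \ref{theorem2} as a summary of the preceding discussion, namely the lower bound from comparing the size $rn/2$ with the Tur\'an bound $\left\lfloor (\chi-1)n^{2}/(2\chi)\right\rfloor$, the trivial inequality $n(r|\chi)\leq c(r|\chi)$, and the upper bound from the Cayley graph $T_{a\chi,\chi}\square K_{b+1}$ of Theorem \ref{theorem1} with $a=(r-b)/(\chi-1)$. No further comment is needed.
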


An $(r|\chi)$-graph $G$ of $n(r|\chi)$ vertices is called \emph{extremal $(r|\chi)$-graph}. Similarly, a Cayley $(r|\chi)$-graph $G$ of $c(r|\chi)$ vertices is called \emph{extremal Cayley $(r|\chi)$-graph}. When $\chi-1|r$ the lower bound and the upper bound of Theorem \ref{theorem2} are equal. We have the following corollary.

\begin{corollary}\label{corollary1}
The Cayley graph $T_{a\chi,\chi}$ is an extremal $(a(\chi-1)|\chi)$-graph.
\end{corollary}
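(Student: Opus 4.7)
The plan is to read the corollary as a direct book-keeping consequence of Theorem~\ref{theorem2} together with the earlier lemma that exhibits $T_{a\chi,\chi}$ as a Cayley graph.

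First I would verify the four relevant parameters of $T_{a\chi,\chi}$: it has $a\chi$ vertices, it is Cayley (by the lemma asserting $T_{ak,k}=\mathrm{Cay}(\mathbb{Z}_a\times\mathbb{Z}_k,X)$ with $k=\chi$), it is $a(\chi-1)$-regular (every partite class has size exactly $a$, so each vertex is joined to the $a(\chi-1)$ vertices outside its class), and it has chromatic number $\chi$ (the partite sets form a $\chi$-coloring, and the presence of $K_\chi$ as a subgraph forbids fewer colors). So $T_{a\chi,\chi}$ is indeed a Cayley $(a(\chi-1)|\chi)$-graph.

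Next I would set $r:=a(\chi-1)$ and apply Theorem~\ref{theorem2}. Since $\chi-1\mid r$, the unique admissible $b$ with $0\leq b<\chi$ and $\chi-1\mid r-b$ is $b=0$. Plugging in,
\[
\left\lceil\frac{r\chi}{\chi-1}\right\rceil=\left\lceil\frac{a(\chi-1)\chi}{\chi-1}\right\rceil=a\chi,
\qquad
\frac{r-b}{\chi-1}\,\chi(b+1)=\frac{a(\chi-1)}{\chi-1}\,\chi\cdot 1=a\chi,
\]
so the lower and upper bounds of Theorem~\ref{theorem2} collapse to the same value $a\chi$. Hence $n(a(\chi-1)|\chi)=c(a(\chi-1)|\chi)=a\chi$, and since $T_{a\chi,\chi}$ realizes this order, it is an extremal $(a(\chi-1)|\chi)$-graph.

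There is essentially no obstacle; the only thing to be careful about is the division condition $\chi-1\mid r-b$ used in Theorem~\ref{theorem2}, which forces $b=0$ in this case, and the trivial observation that the ceiling in the lower bound is superfluous because $a\chi$ is already an integer. Everything else is a substitution.
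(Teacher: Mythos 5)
Your proposal is correct and follows exactly the paper's route: the paper derives the corollary by observing that when $\chi-1\mid r$ (so $b=0$) the lower and upper bounds of Theorem~\ref{theorem2} coincide at $a\chi$, which is precisely your substitution argument, combined with the earlier lemma that $T_{a\chi,\chi}$ is Cayley with the stated regularity and chromatic number. Nothing is missing; your verification of the parameters of $T_{a\chi,\chi}$ is just a slightly more explicit write-up of what the paper leaves implicit.
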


In the remainder of this paper we exclusively work with $b\not=0$, that is, when $\chi-1$ is not a divisor of $r$.

\subsection{Antihole graphs}

A \emph{hole graph} is a cycle of length at least four. An \emph{antihole graph} is the complement $G^c$ of a hole graph $G$. Note that a hole graph and its antihole graph are both connected if and only if their orders are at least five. In this subsection we prove that antihole graphs of order $n$ are extremal $(r|\chi)$-graphs for any $n$ at least six. There are two cases depending of the number of vertices.

\begin{enumerate}
\item $G=C^c_{n}$ for $n=2k$ and $k\geq 3$.

The graph $G$ has regularity $r=2k-3$ and chromatic number $\chi=k$. Any $(2k-3|k)$-graph has an even number of vertices and at least $\frac{r\chi}{\chi-1}=\frac{(2k-3)k}{k-1}=2k-\frac{k}{k-1}$ vertices.

If $k>2$, then $\frac{k}{k-1}<2$. Therefore we have the following result:

\[n(2k-3,k)=c(2k-3,k)=2k\] for all $k\geq 3$. 

\item $G=C^c_{n}$ for $n=2k-1$ and $k\geq 4$.

The graph $G$ has regularity $r=2k-4$ and chromatic number $\chi=k$. Any $(2k-4|k)$-graph has at least $\frac{r\chi}{\chi-1}=\frac{(2k-4)k}{k-1}=2k-2-\frac{2}{k-1}$ vertices.

If $k-1>2$, we have that $\frac{2}{k-1}<1$. Therefore \[2k-2\leq n(2k-4,k) \leq c(2k-4,k) \leq 2k-1\] for all $k\geq 4$.

Suppose that $G$ is a $(2k-4|k)$-graph of $2k-2$ vertices. Then $G=((k-1)K_2)^c$, i.e., $G$ is the complement of a matching of $k-1$ edges. Then $\chi(G)=k-1$, a contradiction. Therefore \[n(2k-4,k)=c(2k-4,k)=2k-1\] for all $k\geq 4$.
\end{enumerate}

Therefore, we have the following theorem.

\begin{theorem}
The antihole graphs of order $n\geq 6$ are extremal $(n-3|\left\lceil \frac{n}{2}\right\rceil )$-graphs.
\end{theorem}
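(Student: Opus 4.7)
The plan is to verify that the antihole $C_n^c$ is an $(n-3|\lceil n/2\rceil)$-graph and then use the lower bound of Theorem \ref{theorem2} to pin down the minimum order, splitting into the parity cases already sketched in the excerpt.

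First I would record the basic parameters of $G=C_n^c$. Since $C_n$ is $2$-regular, $G$ is $(n-3)$-regular. The independence number of $G$ equals $\omega(C_n)=2$, so $\chi(G)\geq \lceil n/2\rceil$, and a partition of $V(C_n)$ into a maximum matching plus (if $n$ is odd) one isolated vertex yields $\lceil n/2\rceil$ cliques in $C_n$, i.e.\ $\lceil n/2\rceil$ independent sets in $G$. Hence $\chi(G)=\lceil n/2\rceil$. So $G$ is indeed an $(n-3|\lceil n/2\rceil)$-graph, giving the upper bound $n(n-3|\lceil n/2\rceil)\leq n$.

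For the matching lower bound, I split on the parity of $n$. When $n=2k$ with $k\geq 3$, Theorem \ref{theorem2} gives
\[ n(2k-3|k)\geq \left\lceil\frac{(2k-3)k}{k-1}\right\rceil = \left\lceil 2k-\frac{k}{k-1}\right\rceil = 2k, \]
since $1<\tfrac{k}{k-1}\leq \tfrac{3}{2}$ for $k\geq 3$; this already closes the case. When $n=2k-1$ with $k\geq 4$, the same bound gives only $n(2k-4|k)\geq 2k-2$, leaving a gap of one vertex.

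The only nontrivial step is therefore ruling out an $(r|\chi)$-graph on $2k-2$ vertices with $r=2k-4$ and $\chi=k$. Such a graph would be $(2k-4)$-regular on $2k-2$ vertices, i.e.\ every vertex would miss exactly one other vertex, forcing $G$ to be the complement of a perfect matching $(k-1)K_2$. But $\chi\bigl(((k-1)K_2)^c\bigr)=k-1$, since pairing up the $k-1$ matching edges as color classes of size $2$ works in the complement. This contradicts $\chi=k$, so $n(2k-4|k)\geq 2k-1$, completing the proof. The only mild obstacle is this uniqueness-of-the-extremal-candidate argument; everything else is a direct computation with Theorem \ref{theorem2}.
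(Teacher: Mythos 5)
Your odd-$n$ case and the verification that $C_n^c$ really is an $(n-3|\lceil n/2\rceil)$-graph are fine and match the paper, but the even-$n$ case contains a genuine arithmetic error that leaves a gap. Since $1<\frac{k}{k-1}\leq\frac{3}{2}$ for $k\geq 3$, we have $2k-\frac{3}{2}\leq 2k-\frac{k}{k-1}<2k-1$, so
\[
\left\lceil \frac{(2k-3)k}{k-1}\right\rceil = 2k-1,\qquad\text{not } 2k
\]
(for example $k=3$ gives $\lceil 4.5\rceil=5$, while $2k=6$). Thus Theorem \ref{theorem2} alone does not close the even case: it leaves open the possibility of a $(2k-3|k)$-graph on $2k-1$ vertices. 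The missing ingredient, which the paper uses explicitly, is the handshake-lemma observation that an $r$-regular graph with $r=2k-3$ odd must have an even number of vertices; combined with the bound $2k-1$ this forces order at least $2k$ and shows $C_{2k}^c$ is extremal. You did split on the parity of $n$, but you never invoked the parity constraint on the order of an odd-regular graph, and your computation as written would fail.

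Everything else is correct and follows the paper's route: in the odd case the bound gives $2k-2$, the unique $(2k-4)$-regular graph on $2k-2$ vertices is $((k-1)K_2)^c$ (the complement being $1$-regular), and its chromatic number $k-1$ rules it out, so $n(2k-4|k)=2k-1$. With the parity fix in the even case your argument becomes a complete proof identical in substance to the paper's.
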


A hole graph is also considered a \emph{$2$-factor} since is a spanning $2$-regular graph. For short, we denote the disjoint union of $j$ cycles of lenght $i$ as $jC_i$. 

Let $G$ be an union of cycles \[a_{3}C_{3}\cup a_4C_{4}\cup\ldots\cup a_{2t}C_{2t}\] for $a_i\geq 0$ with $i\in\{3,4,\dots,2t\}$. Note that the complement $G^c$ of $G$ is the join of the complement of cycles. 

\begin{theorem}
The graph $(a_{3}C_{3}\cup a_{4}C_{4}\cup\ldots\cup a_{2t}C_{2t})^c$ is extremal if $a_5+a_7+\dots+a_{2t-1}+1<a_3$.
\end{theorem}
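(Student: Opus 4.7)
The plan is to verify that $G^c$ already attains the lower bound $\lceil r\chi/(\chi-1)\rceil$ of Theorem~\ref{theorem2}; under the stated hypothesis this lower bound coincides with the order of $G^c$, forcing extremality. First I would justify the formulas for $n$, $r$ and $\chi$ recorded just above the statement. The order is immediate, and the regularity follows from the fact that each vertex has degree $2$ in the disjoint union of cycles, hence degree $n-3$ in the complement. For the chromatic number I use that the complement of a disjoint union is a join, and that the chromatic number of a join is the sum of the chromatic numbers of the factors, combined with the standard values $\chi(C_{2k-1}^c)=\chi(C_{2k}^c)=k$ (both come from a minimum clique cover of the cycle by $\lceil k/2\rceil$ edges and possibly one vertex).

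Next I would unpack the lower bound. Substituting $r=n-3$ gives
\[
\frac{r\chi}{\chi-1}=n-\frac{3\chi-n}{\chi-1},
\]
so the inequality $\lceil r\chi/(\chi-1)\rceil\ge n$ is equivalent to $(3\chi-n)/(\chi-1)<1$, i.e.\ to $n>2\chi+1$. The heart of the proof is therefore to compute $n-2\chi$ and check that the hypothesis is exactly the condition $n-2\chi>1$.

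The computation is a term-by-term comparison: each $C_3$ contributes $3-2(1)=1$ to $n-2\chi$, each $C_{2k}$ contributes $2k-2k=0$, and each $C_{2k-1}$ with $k\ge3$ contributes $(2k-1)-2k=-1$. Summing,
\[
n-2\chi=a_3-(a_5+a_7+\cdots+a_{2t-1}),
\]
so the hypothesis $a_5+a_7+\cdots+a_{2t-1}+1<a_3$ is equivalent to $n-2\chi>1$, i.e.\ to $n>2\chi+1$. By the previous paragraph this gives $\lceil r\chi/(\chi-1)\rceil\ge n$. Since Theorem~\ref{theorem2} supplies $n(r|\chi)\ge\lceil r\chi/(\chi-1)\rceil$ and $G^c$ is an $(r|\chi)$-graph on $n$ vertices, we conclude $n(r|\chi)=n$ and $G^c$ is extremal.

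I do not foresee a serious obstacle: the argument is an arithmetic verification once the invariants of $G^c$ are in hand. The only point that deserves a line of justification is the additivity of the chromatic number under join (equivalently, the formula $\chi((H_1\cup H_2)^c)=\chi(H_1^c)+\chi(H_2^c)$), since the chromatic-number formula for $G^c$ rests on it.
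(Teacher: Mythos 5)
Your proposal is correct and follows essentially the same route as the paper: the paper likewise plugs $r=n-3$ into the lower bound of Theorem~\ref{theorem2}, rewrites it as $n-\frac{3\chi-n}{\chi-1}$, and observes that the hypothesis is exactly the condition $\frac{3\chi-n}{\chi-1}<1$ forcing the ceiling to reach $n$; you merely make explicit the term-by-term computation of $n-2\chi$ that the paper leaves implicit. One tiny slip of wording: your blanket formula $\chi(C_{2k-1}^c)=k$ fails for $C_3$ (where $\chi(C_3^c)=1$), but your actual bookkeeping treats $C_3$ correctly as contributing $1$ to $\chi$, so the argument is unaffected.
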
	
\begin{proof}
Let $G^c = (a_{3}C_{3}\cup a_{4}C_{4}\cup\ldots\cup a_{2t}C_{2t})^c$. The graph $G^c$ has order $n=3a_3+4a_4+\dots+2ta_{2t}$, regularity $r=n-3$ and chromatic number $\chi=a_3+2a_4+3a_5+3a_6+\dots+ta_{2t-1}+ta_{2t}$ since the the chromatic numbers of $C^c_3$, $C^c_4$, $C^c_5$, ...,$C^c_i$ are $1,2,3,\dots, \left\lceil i/2\right\rceil$ respectively.

Any $(r|\chi)$-graph has at least $\frac{r\chi}{\chi-1}=r+\frac{r}{\chi-1}=n-\frac{3\chi-n}{\chi-1}$ vertices for $r=n-3$. If $\frac{3\chi-n}{\chi-1}<1$ then $G^c$ is extremal, that is, when \[2\chi+1 < n,\]
i.e. when
\[a_5+a_7+\dots+a_{2t-1}+1<a_3.\]

\end{proof}

Moreover, we have the following results.

\begin{theorem}\label{theorem6}
Since $C^c_{n}$ is extremal then
\begin{enumerate}
\item When $n$ is even, if $G=(a_{3}C_{3}\cup a_{4}C_{4}\cup\ldots\cup a_{2t}C_{2t})^c$ is a graph of order $n$ such that $a_5+a_7+\dots+a_{2t-1}=a_3$, then $G$ is extremal.

\item When $n$ is odd, if $G=(a_{3}C_{3}\cup a_{4}C_{4}\cup\ldots\cup a_{2t}C_{2t})^c$ is a graph of order $n$ such that $a_5+a_7+\dots+a_{2t-1}=a_3+1$, then $G$ is extremal.
\end{enumerate}
\end{theorem}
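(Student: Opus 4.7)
The plan is to reduce each case of the theorem to the previously established extremality of $C_n^c$ by showing that $G$ and $C_n^c$ have identical regularity and chromatic number; since $G$ already has the right order $n$, it will automatically be extremal.

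First I would note that $G$ is $(n-3)$-regular (each vertex has degree $2$ inside the disjoint union of cycles, hence degree $n-3$ in the complement), matching the regularity of $C_n^c$. So only the chromatic numbers need to be compared. Using the expression
\[\chi(G) = a_3 + 2a_4 + 3a_5 + 3a_6 + \ldots + ta_{2t-1} + ta_{2t}\]
from the paragraph preceding the theorem, together with the order identity $n = 3a_3 + 4a_4 + 5a_5 + \ldots + 2ta_{2t}$, a direct subtraction telescopes to
\[n - 2\chi(G) = a_3 - (a_5 + a_7 + \ldots + a_{2t-1}).\]

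In case (1), the hypothesis $a_5+a_7+\cdots+a_{2t-1}=a_3$ forces $\chi(G) = n/2$, which coincides with $\chi(C_n^c) = k$ for $n=2k$. In case (2), the hypothesis $a_5+a_7+\cdots+a_{2t-1}=a_3+1$ forces $\chi(G) = (n+1)/2$, which coincides with $\chi(C_n^c) = k$ for $n=2k-1$. Either way, $G$ is an $(n-3|\chi(C_n^c))$-graph on $n$ vertices, so the extremality of $C_n^c$ gives $n(n-3,\chi(C_n^c)) = n$, and this value is matched by $G$, so $G$ is itself extremal.

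The only mildly delicate point, and the likely main obstacle in a fully rigorous write-up, is verifying that the displayed formula for $\chi(G)$ is an equality and not merely an upper bound. This follows because the complement of a disjoint union is the join of the complements, because $\chi$ is additive under the join operation, and because $\chi(\overline{C_3}) = 1$ while $\chi(\overline{C_i}) = \lceil i/2 \rceil$ for $i \geq 4$; these facts are already used in the paragraph immediately preceding the theorem, so no further argument is required.
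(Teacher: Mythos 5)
Your proposal is correct and follows exactly the route the paper intends: the paper states Theorem \ref{theorem6} without a separate proof, relying on the computations of order, regularity, and chromatic number in the preceding paragraph together with the already-established extremality of $C^c_n$, which is precisely your reduction (your identity $n-2\chi(G)=a_3-(a_5+a_7+\dots+a_{2t-1})$ makes the equality cases explicit). No gaps; the justification of the chromatic-number formula via the join of complements is the same fact the paper uses implicitly.
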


\begin{corollary}
Since the antihole graphs of order $n\geq 8$ are $(r|\chi)$-graphs, then there exist many non-isomorphic extremal $(r|\chi)$-graphs (not necessarily Cayley).
\end{corollary}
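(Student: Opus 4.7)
The plan is to apply Theorem \ref{theorem6} by exhibiting, for each $n\geq 8$, multiple distinct $2$-factors $F$ on $n$ vertices whose cycle-type $(a_3,a_4,\ldots,a_{2t})$ satisfies the hypothesis of Theorem \ref{theorem6}; each such $F$ then yields an extremal $(n-3|\lceil n/2\rceil)$-graph $F^c$ sharing the parameters $r=n-3$, $\chi=\lceil n/2\rceil$ of the antihole $C_n^c$, which itself corresponds to the one-cycle $2$-factor $F=C_n$.

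For even $n=2k\geq 8$, the hypothesis reads $a_3=a_5+a_7+\cdots$, which is trivially satisfied by any $2$-factor consisting entirely of even cycles of length $\geq 4$. The simplest alternative to the antihole is $F=C_4\cup C_{n-4}$, available for every even $n\geq 8$. One may also pair exactly one triangle with exactly one odd cycle of length $\geq 5$, for instance $F=C_3\cup C_5$ when $n=8$, $F=C_3\cup C_7$ when $n=10$, and so on; the number of valid cycle-types grows without bound with $n$. For odd $n=2k-1\geq 9$ the hypothesis is $a_5+a_7+\cdots=a_3+1$, and a canonical alternative to the antihole is $F=C_5\cup C_{n-5}$, since $n-5\geq 4$ is even; further examples arise by introducing extra triangles paired with extra odd cycles of length $\geq 5$ as $n$ grows.

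To justify the parenthetical ``not necessarily Cayley'', I observe that whenever $F$ is a disjoint union of cycles of two or more distinct lengths, the graph $F^c$ fails to be vertex-transitive, since the isomorphism type of the neighborhood of a vertex $v$ in $F^c$ depends on the length of the component of $F$ containing $v$. For example, in $(C_3\cup C_5)^c$ the neighborhood of a vertex of the $C_3$ part induces $C_5^c\cong C_5$, whereas the neighborhood of a vertex of the $C_5$ part induces $K_{2,3}$. Since every Cayley graph is vertex-transitive, at least some of the extremal graphs produced above are non-Cayley. The only real task is the elementary integer-partition bookkeeping confirming that multiple valid cycle-types exist for every $n\geq 8$; this is routine and presents no genuine obstacle.
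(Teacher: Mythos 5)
Your proposal is correct and follows essentially the same route as the paper, which obtains the corollary directly from Theorem \ref{theorem6} by noting that other $2$-factors with the prescribed cycle-types (e.g.\ $C_4\cup C_4$ and $C_3\cup C_5$ for $n=8$) yield extremal graphs with the same parameters as the antihole. Your explicit families $C_4\cup C_{n-4}$ (even $n$) and $C_5\cup C_{n-5}$ (odd $n$), together with the vertex-transitivity argument showing some of these complements are genuinely non-Cayley, simply make explicit what the paper leaves implicit.
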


For instance, there are three extremal $(5,4)$-graphs, namely, $C^c_8$, $(2C_4)^c$ and $(C_3\cup C_5)^c$. See also Table \ref{tabla1}.

%

\subsection{The case of $r=\chi$}

In this subsection, we discuss the case of $r=\chi=k$, i.e., the $(k|k)$-graphs of minimum order. We have the following bounds so far: \[\left\lceil\frac{k^2}{k-1}\right\rceil=k+1\leq n(k|k)\leq 2k.\]

We prove that the upper bound is correct except for $k=4$ and maybe for $k=6,8,10,12$. To achieve it, we assume that there exist $(k|k)$-graphs of order $n\leq 2k-2$, that is 
\begin{equation}\label{Equation1}
\left\lceil\frac{n}{2}\right\rceil<k=\chi.
\end{equation}
Now, we use a bound for the chromatic number arising from the Reed’s Conjecture, see \cite{MR1610746}. We recall the clique number $\omega(G)$ of a graph $G$ is the largest $k$ for which $G$ has a complete subgraph of order $k$.

\begin{conjecture}
For every graph $G$, \[\chi(G)\leq \left\lceil\frac{\omega(G)+1+\Delta(G)}{2}\right\rceil.\]
\end{conjecture}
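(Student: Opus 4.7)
The statement is Reed's celebrated 1998 conjecture and is still one of the central open problems in chromatic graph theory, so I would not realistically expect to settle it in this proof; the abstract already signals that the author intends to invoke only the instances in which the conjecture is known. The natural plan of attack is the one Reed himself initiated: first show that there is a fixed $\epsilon>0$ with
\[\chi(G)\leq\left\lceil(1-\epsilon)(\Delta(G)+1)+\epsilon(\omega(G)+1)\right\rceil\]
for every graph $G$, and then try to push $\epsilon$ all the way up to $1/2$.

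The skeleton of the argument I would follow is standard. First, partition $V(G)$ into a \emph{sparse} part $S$, consisting of vertices whose closed neighborhoods contain many non-edges, and a \emph{dense} part $D$, where each neighborhood is nearly a clique. On $S$ one runs a random partial coloring and applies the Lov\'asz Local Lemma to show that with positive probability one reserves several extra colors at each vertex, saving a constant fraction of $\Delta-\omega$ over Brooks's bound. On $D$ one uses the near-clique structure to color directly, exploiting the fact that dense neighborhoods interact in a very restricted way. Combining the two pieces gives Reed's $\epsilon$-interpolated bound.

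The main obstacle, and precisely the reason the full conjecture remains open, is that both halves of this scheme are intrinsically lossy: the probabilistic step on $S$ cannot by itself reach the halfway point because the Local Lemma bounds dependencies suboptimally, and the structural step on $D$ would require a finer handle on the global arrangement of maximal cliques than is currently available. For the purposes of this paper the realistic plan is therefore not to prove Reed's conjecture but to identify the (few) small-order candidate graphs that would violate \eqref{Equation1}, and then verify that each candidate falls into a class for which the conjecture has already been established (for instance small $\Delta-\omega$, line and quasi-line graphs, or perfect graphs), yielding the desired contradiction.
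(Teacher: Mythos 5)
You have correctly identified that this statement is Reed's Conjecture, which the paper does not prove at all: it is stated as an open conjecture, and the paper (like your proposal) only invokes the special cases already established in the literature, namely the result of \cite{MR2399379} for graphs whose chromatic number is large relative to their order (Equation \ref{Equation1} and the bound $\chi>\frac{n+3-\alpha}{2}$). Your assessment and intended use of the known instances match the paper's approach exactly.
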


It is known that the conjecture is true for graphs satisfying Equation \ref{Equation1}, see \cite{MR2399379}.  It follows that $k\leq \omega(G)+1$ for any $(k|k)$-graph $G$ of order $n\leq 2k-2$, that is, $\omega(G)=k$ or $\omega(G)=k-1$.

\begin{enumerate}
\item[Case 1:] $\omega(G)=k$.

Let $H_1$ be a clique of $G$ and $H_2=G\setminus V(H_1)$. There is a set of $k$ edges from $V(H_1)$ and $V(H_2)$. Therefore, if $t=n-k\leq k-2$ is the order of $H_2$ and $m=(kt-k)/2$ is the number of edges in $H_2$, then \[m\leq\binom{t}{2}.\] 
We obtain that $k\leq t$, a contradiction.

\item[Case 2:] $\omega(G)=k-1$.

Let $H_1$ be a clique of $G$ and $H_2=G\setminus V(H_1)$. There is a set of $2(k-1)$ edges from $V(H_1)$ to $V(H_2)$. Therefore, if $t=n-(k-1)\leq k-1$ is the order of $H_2$ and $m=(kt-2(k-1))/2$ is the number of edges in $H_2$, then \[m\leq\binom{t}{2}.\] 
We obtain that $k\leq t+1$, hence, $k=t+1$ and $n$ has to be $2k-2$. Since every vertex $v$ in $V(H_2)$ has degree $k$ in $G$, $v$ has at least two neighbours in $H_1$. By symmetry, $G$ is the union of two complete graphs $K_{k-1}$ with the addition of two perfect matchings between them. Its complement is a $(k-3)$-regular bipartite graph. Any perfect matching of $G^c$ induce a $(k-1)$-coloring in $G$, a contradiction.
\end{enumerate}

We have the following results.

\begin{lemma}
For any $k\geq 3$,
\[2k-1\leq n(k|k)\leq c(k|k)\leq 2k.\]
\end{lemma}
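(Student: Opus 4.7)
The plan is to handle the three inequalities separately. The middle inequality $n(k|k)\leq c(k|k)$ is immediate from the definitions, and the upper bound $c(k|k)\leq 2k$ is a direct application of Theorem \ref{theorem1} with $r=\chi=k$: the decomposition $r=a(\chi-1)+b$ forces $a=b=1$, so the construction produces a Cayley $(k|k)$-graph of order $a\chi(b+1)=2k$ (concretely $K_k\square K_2$, which is $k$-regular and has chromatic number $\max(\chi(K_k),\chi(K_2))=k$ by Sabidussi's theorem).

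The substantive content is the lower bound $n(k|k)\geq 2k-1$. I would argue by contradiction: suppose $G$ is a $(k|k)$-graph with $n\leq 2k-2$ vertices. Then $\lceil n/2\rceil\leq k-1<k=\chi(G)$, so Equation \ref{Equation1} holds and the instance of Reed's Conjecture proved in \cite{MR2399379} applies to $G$. Combined with $\Delta(G)=k=\chi(G)$, this forces $\omega(G)\geq k-1$; since $\omega(G)\leq\chi(G)=k$, only the two cases $\omega(G)=k$ and $\omega(G)=k-1$ remain.

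In each case I would pick a maximum clique $H_1$ and bound the edges of $H_2=G-V(H_1)$ using $r$-regularity. When $\omega(G)=k$, each vertex of $H_1$ has exactly one neighbour outside $H_1$, so exactly $k$ edges leave $H_1$; the residual graph $H_2$ of order $t=n-k\leq k-2$ then carries $m=(kt-k)/2$ edges, and $m\leq\binom{t}{2}$ rearranges to $k\leq t$, a contradiction. When $\omega(G)=k-1$, the analogous count forces $n=2k-2$ and pins $G$ down as two disjoint copies of $K_{k-1}$ joined by two perfect matchings; consequently $G^c$ is $(k-3)$-regular and bipartite, so K{\"o}nig's theorem supplies a perfect matching in $G^c$, whose pairs form a proper $(k-1)$-coloring of $G$, again contradicting $\chi(G)=k$.

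The main obstacle I expect is the structural step in Case 2: one must argue that equality in the edge count does not merely bound but actually determines $G$ as two cliques joined by two matchings, and then recognize that the complement is bipartite and regular so that a perfect matching in $G^c$ contracts to a $(k-1)$-coloring of $G$. The remaining computations are routine once $r$-regularity is tracked carefully through both cases.
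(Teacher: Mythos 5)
Your proposal is correct and follows essentially the same route as the paper: the upper bound via the Cayley construction of Theorem \ref{theorem1} (order $2k$), and the lower bound by contradiction using the verified instance of Reed's Conjecture under Equation \ref{Equation1}, splitting into $\omega(G)=k$ and $\omega(G)=k-1$ with the same edge counts and the same structural/matching argument in the second case. Your explicit appeal to K{\"o}nig's theorem merely makes precise the paper's assertion that the $(k-3)$-regular bipartite complement has a perfect matching.
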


If $k$ is odd then the order of any $k$-regular graph is even, therefore:

\begin{corollary}
For any $k\geq 3$ an odd number, $n(k|k)=c(k|k)=2k$.
\end{corollary}

We have that $C^c_7$ is the extremal $(4|4)$-graph. Next, assume that $k\geq 6$ is an even number and there exists a $(k|k)$-graph $G$ of $n=2k-1$ vertices. Owing to the fact that $\chi(G)\leq n-\alpha(G)+1$ where $\alpha(G)$ is the independence number of $G$, we get that $\alpha(G)\leq k$.

In \cite{MR2399379} was proved that the Reed's conjecture holds for graphs of order $n$ satisfying $\chi>\frac{n+3-\alpha}{2}$. In the case of the graph $G$, we have that \[\frac{n+3-\alpha(G)}{2}\leq \frac{k}{2}+1<k.\]
It follows that $\omega(G)\leq k \leq \omega(G)+1$. Newly, we have two cases:
\begin{enumerate}
\item[Case 1:] $\omega(G)=k$.

As we saw before, let $H_1$ be a clique of $G$ and $H_2=G\setminus V(H_1)$. There is a set of $k$ edges from $V(H_1)$ and $V(H_2)$. Therefore, if $t=k-1$ is the order of $H_2$ and $m=(kt-k)/2$ is the number of edges in $H_2$, then \[m\leq\binom{t}{2}.\] 
We obtain that $k\leq t$, a contradiction.

\item[Case 2:] $\omega(G)=k-1$.

In \cite{MR3218276} was proved that every graph satisfies \[\chi\leq\left\{ \omega,\Delta-1,\left\lceil \frac{15+\sqrt{48n+73}}{4}\right\rceil \right\}.\]
Hence, for the graph $G$ we have that $k\leq \left\lceil\frac{15+\sqrt{96k+25}}{4}\right\rceil.$ After some calculations we get that $k=6,8,10,12$, otherwise, $k> \left\lceil\frac{15+\sqrt{96k+25}}{4}\right\rceil.$
\end{enumerate}

Finally, we have the following theorem.
\begin{theorem}
For any $k\geq 3$ such that $k\notin\{4,6,8,10,12\}$, \[n(k|k)=c(k|k)=2k.\] Moreover, if $k=4$ then $n(k|k)=c(k|k)=2k-1$ and if $k\in\{6,8,10,12\}$ then \[2k-1\leq n(k|k)\leq c(k|k)\leq 2k.\]
\end{theorem}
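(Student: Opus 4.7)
The plan is to gather the two lemmas already established in this subsection with the case analysis just carried out. The odd case needs no further argument: the second of these lemmas immediately yields $n(k|k)=c(k|k)=2k$ for every odd $k\geq 3$, since a $k$-regular graph on an odd number of vertices cannot exist. For $k=4$, it suffices to exhibit the antihole $C_7^c$: as observed in the preceding antihole subsection, this graph is $4$-regular on $7$ vertices with chromatic number $4$, so $n(4|4)\leq 7$, which combined with the lower bound $n(k|k)\geq 2k-1$ pins down $n(4|4)=c(4|4)=7$.

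For even $k\geq 6$, the substantive task is to assume for contradiction that a $(k|k)$-graph $G$ of order $n=2k-1$ exists and show this forces $k\in\{6,8,10,12\}$. The argument parallels the one that preceded the first lemma of the subsection, but now with the sharper hypothesis $n=2k-1$ in place of $n\leq 2k-2$. From $\chi(G)\leq n-\alpha(G)+1$ I would get $\alpha(G)\leq k$, whence $(n+3-\alpha(G))/2<k=\chi(G)$; this places $G$ in the family for which Reed's conjecture is known to hold by \cite{MR2399379}, and together with $\chi\leq\lceil(\omega+1+\Delta)/2\rceil$ this forces $\omega(G)\in\{k-1,k\}$. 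The case $\omega(G)=k$ is dispatched by the edge-counting move already used: a maximum $k$-clique $H_1$ leaves $H_2=G\setminus V(H_1)$ of order $t=k-1$ with $m=(kt-k)/2$ interior edges, and $m\leq\binom{t}{2}$ collapses to $k\leq k-1$, a contradiction. For $\omega(G)=k-1$ I would invoke the bound of \cite{MR3218276}, namely $\chi\leq\lceil(15+\sqrt{48n+73})/4\rceil$; substituting $n=2k-1$ and $\chi=k$ turns this into $k\leq\lceil(15+\sqrt{96k+25})/4\rceil$, an inequality whose routine reduction (clearing the ceiling and squaring) gives a quadratic in $k$ whose positive root is close to $12.5$, so it fails whenever even $k\geq 14$.

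The sole obstacle, and the reason for the exceptional list in the statement, is that the bound of \cite{MR3218276} is not tight enough to eliminate $k\in\{6,8,10,12\}$: for these four values the $\omega(G)=k-1$ scenario survives, so the theorem can only record the sandwich $2k-1\leq n(k|k)\leq c(k|k)\leq 2k$. Beyond the numerical check on the inequality involving $\sqrt{96k+25}$, the argument is essentially bookkeeping, gluing the two lemmas, the $C_7^c$ example for $k=4$, and the two Reed-type cases already carried out, so I do not anticipate any deeper technical difficulty.
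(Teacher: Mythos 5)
Your proposal follows the paper's proof essentially step for step: the two lemmas of the subsection dispose of the odd case and supply the bounds $2k-1\leq n(k|k)\leq c(k|k)\leq 2k$, the Cayley antihole $C_7^c$ settles $k=4$, and for even $k\geq 6$ you assume an order-$(2k-1)$ example, invoke the result of \cite{MR2399379} to force $\omega(G)\in\{k-1,k\}$, kill $\omega(G)=k$ by the same clique/edge count, and kill $\omega(G)=k-1$ for even $k\geq 14$ via the bound of \cite{MR3218276} (which in the paper is a maximum also involving $\omega$ and $\Delta-1$, rendered harmless here exactly as you use it), leaving $k\in\{6,8,10,12\}$ open. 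The one shaky step, inferring $(n+3-\alpha(G))/2<k$ from $\alpha(G)\leq k$, is the same terse step the paper itself takes, so your argument is the paper's argument.
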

We point out that if there exists an extremal $(k|k)$-graph $G$ of $2k-1$ vertices for $k\in\{6,8,10,12\}$, then $G$ has clique number $\omega=k-1$, a clique $H_1$ of order $\omega$ for which $G\setminus V(H_1)$ has $\frac{k}{2}-1$ edges, $G$ is Hamiltonian-connected and it has independence number $\alpha(G)$ such that $\alpha(G)\in\{k/4,\dots,k/2+1\}$, see \cite{MR3218276}.


\section{Non-Cayley constructions}\label{section3}

In this section we improve the upper bound of $n(r|\chi)$ given on Theorem \ref{theorem2} by exhibiting a construction of graphs not necessarily Cayley. We assume that $r$ is not a multiple of $\chi-1$, therefore $2\leq \chi \leq r$. Additionally, we show two more constructions which are tight for some values.

\subsection{Upper bound}

To begin with, take the Tur{\' a}n graph $T_{n,\chi}$, for $n=a\chi+b$, $0<b<\chi$ with $r=a(\chi-1)+b$ and the partition $(V_1,V_2,\dots,V_b,V_{b+1},\dots,V_\chi)$ such that $|V_i|=a+1$ if $1\leq i \leq b$ and $|V_i|=a$ if $b+1\leq i \leq \chi$. Every vertex in $V_i$ for $1\leq i \leq b$ has degree $r-1$ and every vertex in $V_i$ for $b+1\leq i \leq \chi$ has degree $r$.

Next, we define the graph $G_{n,\chi}$ as the graph formed by two copies $G_1$ and $G_2$ of $T_{n,\chi}$ with the addition of a matching between the vertices of degree $r-1$ of $G_1$ and the vertices of degree $r-1$ of $G_2$ in the natural way. In consequence, the graph $G_{n,\chi}$ is an $r$-regular graph of order $2n$ and chromatic number $\chi$. To obtain its chromatic number, suppose that $T_{n,\chi}$ has the vertex partition $V_i$, then the vertices of $V_i$ have the color $i$ in $G_1$ and the vertices of $V_i$ are colored $i+1$ mod $\chi$ in $G_2$. Hence $\chi = \chi (G_1) \leq \chi (G_{n,\chi}) \leq \chi$ and then $\chi(G_{n,\chi}) =\chi$.

\begin{theorem}\label{theorem3}
For $2\leq \chi \leq r+1 $, then  \[\left\lceil \frac{r\chi}{\chi-1}\right\rceil\leq n(r|\chi) \leq \min\left\{2\left\lfloor \frac{r\chi}{\chi-1}\right\rfloor ,\frac{r-b}{\chi-1}\chi (b+1) \right\},\]
where $\chi-1|r-b$ with $0\leq b <\chi$. 
\end{theorem}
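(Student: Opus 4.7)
The plan is to exploit the construction $G_{n,\chi}$ sketched immediately before the theorem. The lower bound $\lceil r\chi/(\chi-1)\rceil$ and the upper bound $\frac{r-b}{\chi-1}\chi(b+1)$ are already given by Theorem \ref{theorem2}; only the new upper bound $2\lfloor r\chi/(\chi-1)\rfloor$ needs a justification, and exhibiting $G_{n,\chi}$ as a feasible $(r|\chi)$-graph will provide it.

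First I would write $n=a\chi+b$ with $0<b<\chi$ and $r=a(\chi-1)+b$, and verify that $G_{n,\chi}$ is $r$-regular. In the Tur\'an graph $T_{n,\chi}$ the $b(a+1)$ vertices lying in the partite sets of size $a+1$ have degree $r-1$, while the remaining $(\chi-b)a$ vertices already have degree $r$. The number of degree-$(r-1)$ vertices is the same in the two copies $G_1$ and $G_2$, so the natural matching is bijective; each such vertex gains exactly one edge, bringing its degree to $r$, while the other vertices are untouched. Hence $G_{n,\chi}$ is $r$-regular of order $2n$.

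Next I would check that $\chi(G_{n,\chi})=\chi$. The lower bound is free because $G_1\cong T_{n,\chi}$ is a subgraph of chromatic number $\chi$. For the upper bound the key point, and the main technical obstacle in the proof, is that coloring $V_i\subseteq G_1$ and $V_i\subseteq G_2$ with the same color $i$ violates properness on the matching edges. The fix is to color $V_i\subseteq G_1$ with color $i$ and $V_i\subseteq G_2$ with color $1+(i\bmod \chi)$. Each copy remains properly colored since we only permute color classes, and for every matching edge the two endpoints carry colors $i$ and $1+(i\bmod \chi)$, which differ for all $i$ as long as $\chi\ge 2$. So $\chi$ colors suffice.

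Finally, I would compare $2n$ with $2\lfloor r\chi/(\chi-1)\rfloor$. A direct computation gives
\[
\frac{r\chi}{\chi-1}=a\chi+b+\frac{b}{\chi-1}.
\]
If $0<b<\chi-1$ then $0<b/(\chi-1)<1$, so $\lfloor r\chi/(\chi-1)\rfloor=a\chi+b=n$ and the order of $G_{n,\chi}$ is exactly $2\lfloor r\chi/(\chi-1)\rfloor$. In the boundary case $b=\chi-1$ the ratio is the integer $(a+1)\chi$ and $2n=2(a+1)\chi-2$, which is still bounded above by $2\lfloor r\chi/(\chi-1)\rfloor$. Combining with the bound from Theorem \ref{theorem2} yields the claimed minimum.
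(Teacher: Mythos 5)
Your proposal is correct and follows essentially the same route as the paper: the paper's justification is precisely the construction $G_{n,\chi}$ (two copies of $T_{n,\chi}$ joined by a matching on the degree-$(r-1)$ vertices), combined with the bounds already in Theorem \ref{theorem2}. Your cyclic shift of the color classes in the second copy merely supplies the properness verification that the paper asserts without detail, and your computation $2n=2\lfloor r\chi/(\chi-1)\rfloor$ (with the boundary case $b=\chi-1$) matches the intended argument.
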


\subsection{The graph $T^*_{n,\chi}$}

In this subsection we give a better construction for some values of $r$ and $\chi$. Consider the $(a\chi+b,\chi)$-Tur{\' a}n graph $T_{a\chi+b,\chi}$ such that $\chi>b\geq 0$ and partition $(V_1,\dots,V_{\chi-b},\dots,V_\chi)$ for $\chi \geq 3$, $|V_i|=a_i=a \geq 2$ with $i\in\{1,\dots,\chi-b\}$ and $|V_i|=a_i=a+1 \geq 3$ with $i\in\{\chi-b+1,\dots,\chi\}$.

We claim that $a$ is even or $\chi-b$ is even. To prove it, assume that $a$ and $\chi-b$ are odd. Hence, if $b$ is even, then $\chi$ is odd, $n=a\chi+b$ is odd and $r$ is odd, a contradiction. If $b$ is odd, then $\chi$ is even, $n=a\chi+b$ is odd and $r$ is odd, newly, a contradiction.

Now, we define the graph $T^*_{n,\chi}$ of regularity $r=a(\chi-1)+b-1$ as follows: If $\chi-b$ is even, the removal of a perfect matching between $X_i$ and $X_{i+1}$ for all $i\in\{1,3,\dots,\chi-b-1\}$ of $T_{n,\chi}$ produces $T^*_{n,\chi}$. If $\chi-b\geq 3$ is odd then $a$ is even, therefore, the removal of a perfect matching between $X_i$ and $X_{i+1}$ for all $i\in\{4,6\dots,\chi-b-1\}$ and a perfect matching between $V'_1$ and $V''_2$, $V'_2$ and $V''_3$, and $V'_3$ and $V''_1$ where $V_i\setminus V'_i=V''_i$ is a set of $a/2$ vertices for $i\in\{1,2,3\}$, of $T_{n,\chi}$ produces $T^*_{n,\chi}$.

The graphs $T^*_{n,\chi}$ improve the upper bound given in Theorem \ref{theorem3} for some numbers $n$ and $\chi$:
\[\frac{r\chi}{\chi-1}=a\chi+b-\frac{\chi-b}{\chi-1}\leq a\chi+b.\]
Hence, if $\frac{\chi-b}{\chi-1}<1$, the construction gives extremal graphs, that is, when \[1<b.\] 

\begin{theorem}\label{theorem4}
Let $\chi \geq 3$, $\chi\geq b > 1$ and $a\geq 2$. Then the graph $T^*_{a\chi+b,\chi}$ defined above is an extremal $(a(\chi-1)+b-1|\chi)$-graph when $\chi-b$ is even or $a>2$ is even.
\end{theorem}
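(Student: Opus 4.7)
The plan is to verify three properties of $T^*_{n,\chi}$ with $n=a\chi+b$: it is $r$-regular with $r=a(\chi-1)+b-1$, its chromatic number equals $\chi$, and its order matches the lower bound $\left\lceil \frac{r\chi}{\chi-1} \right\rceil$ of Theorem \ref{theorem2}. Regularity is a direct degree count in the Tur\'an graph $T_{n,\chi}$: vertices in the $\chi-b$ small parts initially have degree $r+1$, while vertices in the $b$ large parts already have degree $r$, and the prescribed matchings decrement the degree of every vertex of every small part by exactly one, in both the even and the odd sub-cases.

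For the chromatic number, the partition $(V_1,\dots,V_\chi)$ survives edge removal, so $\chi(T^*_{n,\chi})\le\chi$. For equality I will exhibit a copy of $K_\chi$. Because $b\ge 2$, no edges are removed between the last $b$ parts, nor between the last $b$ and the first $\chi-b$ parts, so it suffices to find mutually adjacent representatives of the first $\chi-b$ parts. In the even sub-case the removed matchings sit inside disjoint pairs $(V_{2i-1},V_{2i})$, and each such pair retains $a(a-1)\ge 2$ edges, so compatible representatives can be chosen independently in each pair. In the odd sub-case the pairs $(V_4,V_5),\dots,(V_{\chi-b-1},V_{\chi-b})$ are handled identically, while for the triple $V_1,V_2,V_3$ I will pick one vertex from each of $V''_1,V''_2,V''_3$: each such vertex has its only removed edge going to a $V'$-half of a different part, so it remains adjacent to the other two chosen $V''$-vertices. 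The lower bound on the order is then arithmetic: expanding
\[
\frac{r\chi}{\chi-1}=a\chi+b-1+\frac{b-1}{\chi-1}
\]
and using $1<b\le\chi$ yields $0<\frac{b-1}{\chi-1}\le 1$, whence $\left\lceil \frac{r\chi}{\chi-1} \right\rceil=a\chi+b=n$.

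The main subtlety will be locating the triangle inside the triple $V_1,V_2,V_3$ in the odd sub-case: one must check that the three cyclically arranged matchings $M_{12}$, $M_{23}$, $M_{31}$ do not conspire to block every mutually adjacent triple. Choosing from the three $V''$-halves resolves this cleanly, and it is the only place where the combinatorics is non-trivial; the remaining steps (regularity, the chromatic upper bound from the partition, and the ceiling computation) are routine.
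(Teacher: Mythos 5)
Your proposal is correct and follows essentially the same route as the paper: the paper's argument is precisely the construction plus the computation $\frac{r\chi}{\chi-1}=a\chi+b-\frac{\chi-b}{\chi-1}$, which shows the order $a\chi+b$ meets the lower bound of Theorem \ref{theorem2} once $b>1$. Your additional verifications (the degree count and, in particular, the explicit $K_\chi$ built from the untouched large parts, one surviving edge in each matched pair, and one vertex from each of $V''_1,V''_2,V''_3$ in the odd sub-case) simply make rigorous what the paper leaves implicit, namely that the matching removals neither break regularity nor lower the chromatic number.
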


\subsection{The graph $G_{a,c,t}$}

Consider the $(at,t)$-Tur{\' a}n graph $T_{at,t}$ with partition $(V_1,\dots,V_t)$. Now, we define the graph $G_{a,c,t}$ with $1\leq c<a$ as follows: consider two parts of $(V_1,\dots,V_t)$, e.g. $V_1$ and $V_2$, and $c$ vertices of these two parts $\{u_1,\dots,u_c\}\subseteq V_1$ and $\{v_1,\dots,v_c\}\subseteq V_2$. 

The removal of the edges $u_iv_j$ for $i,j\in\{1,\dots,c\}$ when $i\not=j$ (all the edges between $\{u_1,\dots,u_c\}$ and $\{v_1,\dots,v_c\}$ except for a matching) and the addition of the edges $u_iu_j$ and $v_iv_j$ for $i,j\in\{1,\dots,c\}$ when $i\not=j$ (all the edges between the vertices $u_i$ and all the edges between the vertices $v_i$) results in the graph $G_{a,c,t}$.

The graph $G_{a,c,t}$ is a $a(t-1)$-regular graph of order $at$. Its chromatic number is $t+c-1$ because the partition 
\[(V_1\setminus\{u_2,\dots,u_c\},V_2\setminus\{v_1,\dots,v_{c-1}\},V_2,\dots,V_t,\{u_2,v_1\},\dots,\{u_c,v_{c-1}\})\]
is a proper coloring with $t+c-1$ colors. Moreover, the graph $G_{a,c,t}$ has a clique of $t+c-1$ vertices, namely, the vertices $\{u_1,\dots,u_c,x_2,\dots,x_t\}$ where $x_i\in V_i$ for $i\in\{3,\dots,t\}$ and $x_2\in V_2\setminus\{v_1\dots,v_c\}$.

The graphs $G_{a,c,t}$ improve the upper bound given in Theorem \ref{theorem2}:
\[\frac{t+c-1}{t+c-2}a(t-1)=at-a\frac{c-1}{t+c-2}\leq at.\]
Hence, if $a\frac{c-1}{t+c-2}<1$, the construction gives extremal graphs, that is, when \[(a-1)(c-1)<t-1.\] 

\begin{theorem}\label{theorem5}
Let $a,t\geq 2$ and $a>c\geq 1$. The graph $G_{a,c,t}$ defined above is an extremal $(a(t-1)|at)$-graph when $(a-1)(c-1)<t-1$.
\end{theorem}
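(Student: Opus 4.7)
The plan is to verify the three structural claims about $G_{a,c,t}$ spelled out in the paragraphs preceding the statement — namely, that it is $a(t-1)$-regular of order $at$ and has chromatic number $t+c-1$ — and then appeal to the lower bound of Theorem \ref{theorem2} to deduce extremality. Almost every calculation is already displayed above, so the work is to justify each ingredient carefully.

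First I would check regularity. In $T_{at,t}$ every vertex has degree $a(t-1)$, and only the $2c$ vertices in $\{u_1,\dots,u_c\}\cup\{v_1,\dots,v_c\}$ are affected by the construction. For each $u_i$ we delete the $c-1$ edges $u_iv_j$ with $j\neq i$ and add the $c-1$ edges $u_iu_j$ with $j\neq i$, so the degree of $u_i$ is preserved; an identical bookkeeping works for each $v_i$. All remaining vertices retain their Tur{\' a}n degree, hence $G_{a,c,t}$ is $a(t-1)$-regular of order $at$.

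Next I would pin down the chromatic number. The upper bound $\chi(G_{a,c,t})\leq t+c-1$ comes from checking that the explicit partition in the text is proper: $V_1\setminus\{u_2,\dots,u_c\}$ and $V_2\setminus\{v_1,\dots,v_{c-1}\}$ remain independent because the only edges added inside $V_1$ (resp.\ $V_2$) lie among the removed $u_i$'s (resp.\ $v_i$'s); the classes $V_3,\dots,V_t$ are untouched; and each pair $\{u_j,v_{j-1}\}$ is independent since $u_jv_{j-1}$ was deleted. For the matching lower bound one verifies that $\{u_1,\dots,u_c,x_2,\dots,x_t\}$ induces a clique: the edges $u_iu_j$ are precisely the added ones, the edges from $u_i$ to $x_j$ with $j\geq 3$ and the edges between distinct $x_j$'s lie between different Tur{\' a}n parts and were never modified, and the edge $u_ix_2$ survives because $x_2$ was chosen in $V_2\setminus\{v_1,\dots,v_c\}$. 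Hence $\chi(G_{a,c,t})=t+c-1$.

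Finally, I would conclude extremality by comparing the order $at$ with the bound of Theorem \ref{theorem2}. Setting $r=a(t-1)$ and $\chi=t+c-1$, the identity
\[\frac{r\chi}{\chi-1}=a(t-1)\cdot\frac{t+c-1}{t+c-2}=at-\frac{a(c-1)}{t+c-2}\]
shows that the hypothesis $(a-1)(c-1)<t-1$ is equivalent to $a(c-1)<t+c-2$, and hence to $\lceil r\chi/(\chi-1)\rceil\geq at$. Theorem \ref{theorem2} then forces $n(r|\chi)\geq at$, and since $G_{a,c,t}$ realizes this order, it is extremal. The only real obstacle is the bookkeeping in the coloring and the clique; the extremality itself is an immediate arithmetic consequence of Theorem \ref{theorem2} once the two structural parameters are established.
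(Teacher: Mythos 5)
Your proposal is correct and follows essentially the same route as the paper: verify that $G_{a,c,t}$ is $a(t-1)$-regular of order $at$, establish $\chi=t+c-1$ via the explicit $(t+c-1)$-coloring and the clique $\{u_1,\dots,u_c,x_2,\dots,x_t\}$, and then compare $at$ with the lower bound $\lceil r\chi/(\chi-1)\rceil$ of Theorem \ref{theorem2}, using that $(a-1)(c-1)<t-1$ makes the defect $a(c-1)/(t+c-2)$ less than $1$. You also correctly read the intended parameters (chromatic number $t+c-1$, order $at$) despite the compressed notation in the theorem statement, so nothing further is needed.
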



\section{Small values} \label{section4}

In this section we exhibit extremal $(r|\chi)$-graphs of small orders. These exclude the extremal graphs given before. Table \ref{tabla1} shows the extremal $(r|\chi)$-graphs for $2\leq r \leq 10$ and $2\leq \chi \leq 6$.

\begin{table}[!htbp]
\begin{center}
\begin{tabular}{|c|c|c|c|c|c|}
\hline 
$r\setminus\chi$ & $2$ & $3$ & $4$ & $5$ & $6$\tabularnewline
\hline 
$2$ & $T_{4,2}$ & $T_{3,3}$ & - & - & -\tabularnewline
\hline 
$3$ & $T_{6,2}$ & $C^c_6$ & $T_{4,4}$ & - & -\tabularnewline
\hline 
$4$ & $T_{8,2}$ & $T_{6,3}$ & $C_7^c$ & $T_{5,5}$ & -\tabularnewline
\hline 
\multirow{2}{*}{$5$} & \multirow{2}{*}{$T_{10,2}$} & \multirow{2}{*}{$G_{5,2,2}$} & $C_8^c,(2C_4)^c,$ & \multirow{2}{*}{$K_5\times K_2$} & \multirow{2}{*}{$T_{6,6}$}\tabularnewline
 &  &  & $(C_3\cup C_5)^c$ &  & \tabularnewline
\hline 
$6$ & $T_{12,2}$ & $T_{9,3}$ & $T_{8,4}$ & $C_{9}^c,(C_4\cup C_5)^c$ & ? \tabularnewline
\hline 
\multirow{2}{*}{$7$} & \multirow{2}{*}{$T_{14,2}$} & \multirow{2}{*}{$T^*_{12,3}$} & \multirow{2}{*}{$T^*_{10,4}$} & $C^c_{10},(C_4\cup C_6)^c$ & \multirow{2}{*}{$(2C_5)^c$}\tabularnewline
 &  &  &  & $(C_3\cup C_7)^c$ & \tabularnewline
\hline
\multirow{2}{*}{$8$} & \multirow{2}{*}{$T_{16,2}$} & \multirow{2}{*}{$T_{12,3}$} & \multirow{2}{*}{$G_{4,2,3}$} & \multirow{2}{*}{$T_{10,5}$} & $C^c_{11},(C_4\cup C_7)^c$\tabularnewline
 &  &  &  &  & $(C_5\cup C_6)^c$\tabularnewline
\hline 
 &  &  &  &  & $C^c_{12},(2C_6)^c,(3C_4)^c$ \tabularnewline
$9$ & $T_{18,2}$ & $T^{**}_{16,3}$ & $T_{12,4}$ & $T^*_{12,5}$ & $(C_3\cup C_4 \cup C_5)^c$\tabularnewline
 &  &  &  &  & $(C_3\cup C_9)^c$ \tabularnewline
\hline 
$10$ & $T_{20,2}$ & $T_{15,3}$ & $T^*_{14,4}$ & $T^*_{13,5}$ & $T_{12,6}$\tabularnewline
\hline 
\end{tabular}
\caption{\label{tabla1}Extremal $(r|\chi)$-graphs.}
\par
\end{center}
\end{table}

\subsection{Extremal $(5|3)$-graph}

Suppose that $G$ is an extremal $(5|3)$-graph of order $8$, i.e., its order equals the lower bound given in Theorem \ref{theorem2}. Then its complement is $2$ regular. That is, $G^c$ is $C_8$ or $C_5\cup C_3$ or $C_4\cup C_4$. By Theorem \ref{theorem6}, the complement of $C_8$ or $C_5\cup C_3$ or $C_4\cup C_4$ has chromatic number $4$. Since $G$ is $5$-regular, a $(5|3)$-graph of order $9$ does not exist and therefore $10$ is the best possible. The graph $G_{5,2,2}$ is an extremal $(5|3)$-graph with $10$ vertices.

\subsection{Extremal $(7|\chi)$-graphs for $\chi=3,6$}

Let $G$ be an extremal $(7|3)$-graph. Its order is at least $11$. Since its degree is odd, its order is at least $12$. The graph $T^*_{12,3}$ is an extremal $(7|3)$-graph.

Now, suppose that $G$ is an extremal $(7|6)$-graph. $G$ has at least $9$ vertices. Newly, because it has an odd regularity, $G$ has at least $10$ vertices. If this is the case, its complement is a $2$ regular graph. The graph $(2C_5)^c$ has chromatic number $6$. It is unique and it is Cayley.

\subsection{Extremal $(9|3)$-graph}

Any $(9|3)$-graph has $14$ vertices, i.e., its order equals the lower bound given in Theorem \ref{theorem2}. Suppose that there exist at least one of degree $14$. Let $(V_1,V_2,V_3)$ a partition by independent sets. Some of the parts, $V_1$, has at least five vertices. Since the graph is $9$-regular, $V_1$ has exactly $5$ vertices. The induced graph of $V_2$ and $V_3$ is a bipartite regular graph of an odd number of vertices, a contradiction. Then, any $(9|3)$-graph has at least $16$ vertices.

Consider the graph $T_{16,3}$ with partition $(U,V,W)$ and the sets partition are $U=\{u_1,u_2,u_3,u_4,u_5\}$, $V=\{v_1,v_2,v_3,v_4,v_5\}$, $W=\{w_1,w_2,w_3,w_4,w_5,w_6\}$. The removal of the edges \[\{w_1v_1,v_1u_1,u_1w_4,w_2v_2,v_2u_2,u_2w_5,w_3v_3,v_3u_3,u_3w_6,u_4v_4,v_4u_5,u_5v_5,v_5u_4\}\] is the graph $T^{**}_{16,3}$ which is the extremal $(9|3)$-graph.

\section*{Acknowledgment}

We thank Robert Jajcay for useful discussions.

C. Rubio-Montiel was partially supported by PAIDI grant 007/19.

The authors wish to thank the anonymous referees of this paper for their suggestions and remarks.


\bibliographystyle{amsplain}
\bibliography{biblio}
\end{document}